\documentclass[12pt, a4paper]{amsart}
\usepackage{amsmath, amssymb, amsthm, setspace, hyperref}

\newtheorem*{theorem}{Theorem}

\newtheorem*{lemma}{Lemma}
\newtheorem*{corollary}{Corollary}

\theoremstyle{definition}

\newtheorem*{ack}{Acknowledgments}

\onehalfspacing
\begin{document}

\title{Jacobson's Refinement of Engel's Theorem for Leibniz Algebras}

\author[Bosko]{Lindsey Bosko}
\address{Department of Natural Sciences \& Mathematics, West Liberty University\\
West Liberty, WV 26074}
\email{lrbosko@ncsu.edu}

\author[Hedges]{Allison Hedges}
\thanks{The research of the second author was supported by NSF grant number 526797}
\address{Department of Mathematics, North Carolina State University\\
Raleigh, NC 27695}
\email{armcalis@ncsu.edu}

\author[Hird]{J.T. Hird}
\address{Department of Mathematics, North Carolina State University\\
Raleigh, NC 27695}
\email{johnthird@gmail.com}

\author[Schwartz]{Nathaniel Schwartz}
\thanks{The research of the fourth author was supported by NSF grant number 526797}
\address{Department of Mathematics, North Carolina State University\\
Raleigh, NC 27695}
\email{njschwar@ncsu.edu}

\author[Stagg]{Kristen Stagg}
\thanks{The research of the fifth author was supported by NSF grant number 526797}
\address{Department of Mathematics, University of Texas at Tyler\\
Tyler, TX 75799}
\email{klstagg@ncsu.edu}

\subjclass[2000]{Primary 17A32; Secondary 17B30}

\keywords{Jacobson's refinement, Engel's Theorem, Leibniz algebras, Lie algebras, nilpotent, bimodule}

\date{\today}

\begin{abstract}
We develop Jacobson's refinement of Engel's Theorem for Leibniz algebras. We then note some consequences of the result.
\end{abstract}

\maketitle

Since Leibniz algebras were introduced by Loday in \cite{loday} as a noncommutative generalization of Lie algebras, one theme is to extend Lie algebra results to Leibniz algebras. In particular, Engel's theorem has been extended in \cite{ayupov}, \cite{barnes2}, and \cite{patsourakos}. In \cite{barnes2}, the classical Engel's theorem is used to give a short proof of the result for Leibniz algebras. The proofs in \cite{ayupov} and \cite{patsourakos} do not use the classical theorem and, therefore, the Lie algebra result is included in the result. In this note, we give two proofs of the generalization to Leibniz algebras of Jacobson's refinement to Engel's theorem, a short proof which uses Jacobson's theorem and a second proof which does not use it. It is interesting to note that the technique of reducing the problem to the special Lie algebra case significantly shortens the proof for the general Leibniz algebras case. This approach has been used in a number of situations, see \cite{barnes}.  We also note some standard consequences of this theorem.  The proofs of the corollaries are exactly as in Lie algebras (see \cite{kaplansky}). Our result can be used to directly show that the sum of nilpotent ideals is nilpotent, and hence one has a nilpotent radical. In this paper, we consider only finite dimensional algebras and modules over a field $\mathbb{F}$.

An algebra $A$ is called Leibniz if it satisfies $x(yz) = (xy)z + y(xz)$. Denote by $R_a$ and $L_a$, respectively, right and left multiplication by $a \in A$. Then
\begin{align}
R_{bc} & = R_cR_b + L_bR_c\\
L_bR_c & = R_cL_b + R_{bc}\\
L_cL_b & = L_{cb} + L_bL_c.
\end{align}
Using (1) and (2) we obtain
\begin{align}
R_cR_b & = -R_cL_b.
\end{align}

It is known that $L_b = 0$ if $b = a^i, i\ge 2$, where $a^{1} = a$ and $a^{n}$ is defined inductively as $a^{n+1}=aa^{n}$.  Furthermore, for $n > 1$, $R_a^n = (-1)^{n-1}R_aL_a^{n-1}$. Therefore $R_a$ is nilpotent if $L_a$ is nilpotent. 

For any set $X$ in an algebra, we let $\langle \ X \ \rangle$ denote the algebra generated by $X$. Using (1), $R_{a^{2}} = (R_a)^{2} + L_aR_a$. Furthermore, the associative algebra generated by all $R_b$, $L_b$, $b \in \langle \ a \ \rangle$ is equal to $\langle \ R_a, L_a\ \rangle$. Suppose that $L_a^{n-1} = 0$. Then $R_a^n = 0$. For any $s \in \langle\ R_a, L_a\ \rangle$, $s^{2n -1}$ is a combination of terms with each term having at least $2n -1$ factors. Moreover, each of these factors is either $L_a$ or $R_a$. Any $L_a$ to the right of the first $R_a$ can be turned into an $R_a$ using (4). Hence, any term with $2n-1$ factors can be converted into a term with either $L_a$ in the first $n-1$ leading positions or $R_a$ in the last $n$ postitions. In either case, the term is 0 and $s^{2n-1} = 0$. Thus $\langle \ R_a, L_a\ \rangle$ is nil and hence nilpotent.

Let $M$ be an $A$-bimodule and let $T_a(m) = am$ and $S_a(m) = ma$, $a \in A$, $m \in M$. The analogues of (1) - (4) hold:
\begin{align}
S_{bc} & = S_cS_b + T_bS_c\\
T_bS_c & = S_cT_b + S_{bc}\\
T_cT_b & = T_{cb} + T_bT_c\\
S_cS_b & = - S_cT_b.
\end{align}
These operations have the same properties as $L_a$ and $R_a$, and the associative algebra $\langle \ T_a, S_a\ \rangle$ generated by all $T_b, S_b, b \in \langle \ a \ \rangle$ is nilpotent if $T_a$ is nilpotent. We record this as

\begin{lemma}
Let $A$ be a finite dimensional Leibniz algebra and let $a \in A$. Let $M$ be a finite dimensional $A$-bimodule such that $T_a$ is nilpotent on $M$. Then $S_a$ is nilpotent, and $\langle \ S_a, T_a\ \rangle$, the algebra generated by all $S_b, T_b$, $b \in \langle \ a \ \rangle$, is nilpotent.
\end{lemma}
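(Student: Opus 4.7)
The plan is to transfer, step by step, the argument already carried out on $A$ for $L_a, R_a$ to the bimodule $M$ with $T_a, S_a$, since equations (5)--(8) are exact analogues of (1)--(4). The author essentially signals this by writing ``these operations have the same properties,'' so most of the work is reorganizing the previous paragraph in the new setting.

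First I would establish the analogue of $L_{a^i} = 0$ for $i \geq 2$. Setting $c = b = a$ in (7) gives $T_a T_a = T_{a^2} + T_a T_a$, whence $T_{a^2} = 0$; an easy induction using (7) with $c = a$ and $b = a^{i-1}$ then yields $T_{a^i} = 0$ on $M$ for all $i \geq 2$. Next I would prove the analogue of $R_a^n = (-1)^{n-1} R_a L_a^{n-1}$: equation (8) gives $S_a S_a = - S_a T_a$, and iterating this identity inductively produces $S_a^n = (-1)^{n-1} S_a T_a^{n-1}$ for $n > 1$. In particular, if $T_a^{n-1} = 0$ on $M$, then $S_a^n = 0$, so $S_a$ is nilpotent.

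For the final claim, I would first observe that the associative algebra generated by all $T_b, S_b$ with $b \in \langle\ a\ \rangle$ coincides with $\langle\ T_a, S_a\ \rangle$: any $b \in \langle\ a\ \rangle$ is a linear combination of powers $a^i$, the operators $T_{a^i}$ with $i \geq 2$ vanish by the step above, and each $S_{a^i}$ can be expressed inside $\langle\ T_a, S_a\ \rangle$ via the recursion $S_{a^i} = S_{a^{i-1}} S_a + T_a S_{a^{i-1}}$ obtained from (5). Then, assuming $T_a^{n-1} = 0$, I would repeat the counting argument from the paragraph preceding the lemma: given $s \in \langle\ T_a, S_a\ \rangle$, every monomial of length $2n-1$ in the expansion of $s^{2n-1}$ is a product of $T_a$'s and $S_a$'s, and (8) allows us to replace every $T_a$ to the right of the leftmost $S_a$ by $-S_a$, leaving each monomial as $\pm$(string of $T_a$'s)(string of $S_a$'s). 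A pigeonhole split then forces either at least $n-1$ leading $T_a$'s (killing the monomial since $T_a^{n-1} = 0$) or at least $n$ trailing $S_a$'s (killing it since $S_a^n = 0$). Hence $s^{2n-1} = 0$, so $\langle\ T_a, S_a\ \rangle$ is nil and therefore nilpotent.

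The main obstacle is essentially bookkeeping rather than any new idea: the delicate point is only that the descriptions ``generated by $T_a, S_a$'' and ``generated by $T_b, S_b$ for $b \in \langle\ a\ \rangle$'' really do agree, which relies simultaneously on the vanishing $T_{a^i} = 0$ for $i \geq 2$ and on the fact that the $S_{a^i}$ can be reduced back into $\langle\ T_a, S_a\ \rangle$ through (5); once this is in place, the identity (8) and the pigeonhole step carry over verbatim.
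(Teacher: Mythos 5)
Your proposal is correct and follows essentially the same route as the paper: the paper proves the lemma by carrying out exactly this argument for $L_a, R_a$ on $A$ in the paragraph preceding the lemma (vanishing of $L_{a^i}$ for $i\ge 2$, the identity $R_a^n=(-1)^{n-1}R_aL_a^{n-1}$, the identification of the generated algebra with $\langle R_a, L_a\rangle$, and the length-$(2n-1)$ pigeonhole argument showing the algebra is nil hence nilpotent), and then transfers it verbatim to $T_a, S_a$ via the analogous identities (5)--(8). You have simply written out that transfer explicitly, which matches the paper's intent.
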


A subset of $A$ which is closed under multiplication is called a Lie set.

\begin{theorem}
(Jacobson's refinement of Engel's theorem for Leibniz algebras) Let $A$ be a finite dimensional Leibniz algebra and $M$ be a finite dimensional $A$-bimodule. Let $C$ be a Lie set in $A$ such that $A = \langle \ C \ \rangle$. Suppose that $T_c$ is nilpotent for each $c \in C$. Then, for all $a \in A$, the associative algebra $B = \langle \ S_a, T_a\ \rangle$  is nilpotent. Consequently $B$ acts nilpotently on $M$, and there exists $m \in M$, $m \ne 0$, such that $am = ma = 0$ for all $a \in A$.
\end{theorem}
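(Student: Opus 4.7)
My plan is to reduce the statement to the classical Jacobson refinement of Engel's theorem for Lie algebras of endomorphisms, which I would invoke twice. First I observe that $\Gamma = \{T_c : c \in C\}$ is a Lie set in $\mathrm{End}(M)$ under the commutator: identity (7) yields $[T_{c_1},T_{c_2}] = T_{c_1 c_2}$, and $c_1 c_2 \in C$ because $C$ is a Lie set in $A$. Since each $T_c$ is nilpotent, classical Jacobson applied to $\Gamma$ gives that every operator in the Lie subalgebra generated by $\Gamma$ is nilpotent. Because $C$ is closed under multiplication, $A = \langle C\rangle$ is simply the linear span of $C$, so this Lie subalgebra contains $\{T_a : a \in A\}$; in particular, $T_a$ is nilpotent for every $a \in A$. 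The Lemma then gives, for each $a \in A$, that $S_a$ is nilpotent and that $B = \langle S_a, T_a\rangle$ is a nilpotent associative algebra, whence $B$ acts nilpotently on $M$.

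For the existence of a common null vector, I would proceed in two stages. Let $N = \{m \in M : T_a m = 0 \text{ for all } a \in A\}$; classical Engel (a special case of Jacobson) applied to the Lie algebra of nilpotent operators $\{T_a : a \in A\}$ shows $N \neq 0$. Identity (8) gives $S_b S_a m = -S_b T_a m = 0$ for $m \in N$, so the subspace $S(N) := \sum_{a \in A} S_a N$ is annihilated by every $S_b$. Identity (6) in the form $T_b S_a = S_a T_b + S_{ba}$ shows that for $m \in N$, $T_b(S_a m) = S_{ba} m$, which lies in $S(N)$; hence $S(N)$ is $T_a$-invariant for each $a \in A$. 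If $S(N) = 0$, then $S_a m = 0$ for every $m \in N$ and every $a \in A$, and any nonzero $m \in N$ is the required vector. Otherwise $S(N) \neq 0$, and a second application of classical Engel to the nilpotent operators $\{T_a|_{S(N)} : a \in A\}$ produces $n \in S(N)$ with $n \neq 0$ and $T_a n = 0$ for every $a$; since $n \in S(N)$, we automatically have $S_a n = 0$ for every $a$ as well.

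The main obstacle, and the reason the common null vector must be found in two passes, is that the combined family $\{T_c, S_c : c \in C\}$ is not a Lie set in $\mathrm{End}(M)$: identity (8) forces $[S_{c_1}, S_{c_2}]$ to be a combination of products of $T$'s and $S$'s rather than a single operator of the required form. A direct one-shot appeal to classical Jacobson on the combined family is therefore unavailable, and one instead exploits the identity $S_a S_b = -S_a T_b$ to handle the right multiplications separately, collecting them into the auxiliary space $S(N)$ on which every $S_b$ already vanishes.
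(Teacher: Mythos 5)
Your proposal is correct, and it takes a genuinely different route from both proofs in the paper. You share with the paper's Proof 1 the high-level strategy of reducing the left action to the classical Lie theorem: identity (7) gives $[T_{c_1},T_{c_2}]=T_{c_1c_2}$, so $\{T_c : c\in C\}$ is a Lie set of nilpotent operators in $\mathrm{End}(M)$, classical Jacobson makes every $T_a$ nilpotent for $a\in A=\mathrm{span}(C)$, and the Lemma then yields the nilpotency of $\langle\, S_a, T_a\,\rangle$, exactly as you argue. Where you diverge is in handling the right action. Proof 1 quotes Lemma 1.9 of Barnes (an irreducible bimodule satisfies $MA=0$ or $ma=-am$) and passes to a composition series, so the right action is slaved to the left action on each irreducible factor and the whole algebra of operators is strictly triangularized at once. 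You instead control the right action by direct computation: identity (8) makes every $S_b$ vanish on $S(N)=\sum_a S_aN$, identity (6) makes $S(N)$ invariant under the $T_a$, and a second application of Engel inside $S(N)$ (or the trivial case $S(N)=0$) produces the common null vector. This costs a two-stage argument but buys independence from Barnes's classification of irreducible Leibniz bimodules; it is also entirely unlike the paper's Proof 2, which avoids the classical theorem altogether via a maximal Lie subset $D\subseteq C$ whose span acts nilpotently. One small remark: your argument delivers exactly what the theorem literally asserts (nilpotency of $\langle\, S_a,T_a\,\rangle$ for each fixed $a$, plus the annihilated vector), whereas both of the paper's proofs actually show that the associative algebra generated by all $T_a, S_a$ simultaneously acts nilpotently on $M$, from which the null vector is an immediate corollary; recovering that stronger global statement in your setup would take one further rewriting step with identities (6) and (8).
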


\begin{proof}

{\it Proof 1 (using the Lie result)}:
If $M$ is irreducible, then either $MA = 0$ or $ma = -am$ for all $a$ in $A$ and all $m$ in $M$ from Lemma 1.9 of \cite{barnes}. Since left multiplication of $A$ on $M$ gives a Lie module, the Jacobson refinement to Engel's theorem yields that $A$ acts nilpotently on $M$ on the left and hence on $M$ as a bimodule. If $M$ is not irreducible, then $A$ acts nilpotently on the irreducible factors in a composition series of $M$ and hence on $M$.  

{\it Proof 2 (independent of the Lie result)}: 
Let $x \in C$. Then $T_x$ is nilpotent and the associative algebra generated by $T_b$ and $S_b$ for all $b \in \langle \ x \ \rangle$ is nilpotent by the lemma. Since $\{a\ |\ aM = 0 = Ma\}$ is an ideal in $A$, we may assume that $A$ acts faithfully on $M$. 

Let $D$ be a Lie subset of $C$ such that $\langle\ D \ \rangle$ acts nilpotently on $M$, and $\langle\ D \ \rangle$ is maximal with these properties. If $C \subseteq \langle\ D \ \rangle$, then $A = \langle\ C \ \rangle = \langle\ D \ \rangle$, and we are done. Thus suppose that $C \nsubseteq \langle\ D \ \rangle$, and we will obtain a contradiction.

Let $E = \langle\ D \ \rangle \cap C$. $E$ is a Lie set since both $\langle\ D \ \rangle$ and $C$ are Lie sets. Since $D \subseteq \langle\ D \ \rangle$ and $D \subseteq C$, it follows that $D \subseteq E$ and $\langle\ D \ \rangle \subseteq \langle\ E \ \rangle$. Since $E \subseteq \langle\ D \ \rangle$, $\langle\ E \ \rangle \subseteq \langle\ D \ \rangle$ and $\langle\ D \ \rangle = \langle\ E \ \rangle$.

Let $\dim(M) = n$. Since $\langle\ D\ \rangle = \langle\ E\ \rangle$ acts nilpotently on $M$, $\sigma_1\cdots\sigma_n = 0$ where $\sigma_i = S_{d_i}$ or $T_{d_i}$ for $d_i \in E$. Then:

\noindent $\sigma_1\cdots\sigma_i\tau\sigma_{i+1}\cdots\sigma_{2n-1}=0$ where $\tau = S_a$ or $T_a$, $a \in A$, for all $i$.

If $x$ is any product in $A$ with $2n$ terms, of which $2n-1$ come from $E$, then $S_x$ and $T_x$ are linear combinations of elements as in the last paragraph. Hence $S_x = T_x = 0$, which implies that $x = 0$, since the representation is faithful.

There exists a smallest positive integer $j$ such that $\tau_1\cdots\tau_j C \subseteq \langle\ E\ \rangle$ for all $\tau_1, \dots ,\tau_j$ with $\tau_i = R_{d_i}$ or $L_{d_i}$ where $d_i \in E$. Then there exists an expression $z = \tau_{d_1}\cdots\tau_{d_{j-1}} x \notin \langle\ E\ \rangle$  for some $x \in C$ and $d_i \in E$. Note that $z \in C$ since $C$ is a Lie set. Consider $zE$. Now, $zC$, $Cz \subseteq C$ and $z \langle\ E\ \rangle$, $\langle\ E\ \rangle z \subseteq \langle\ E\ \rangle$. Therefore $zE$, $Ez \subseteq E$. Then $z^nE$, $Ez^n \subseteq E$ for all positive integers $n$, using induction and the defining identity for Leibniz algebras. Then $F = \{z^n, n \geq 1 \} \cup E$ is a Lie set contained in $C$, and since $z \notin \langle\ E\ \rangle$, it follows that $\langle\ E\ \rangle \subsetneq \langle\ F\ \rangle$.

It remains to show that $\langle\ F\ \rangle$ acts nilpotently on $M$. Define $M_0 = 0$ and $M_i = \{m \in M\ |\ Em, mE \subseteq M_{i -1}\}$. Since $E$ acts nilpotently on $M$, $M_k = M$ for some $k$. We show $zM_i$, $M_iz \subseteq M_i$. Clearly $zM_0 = M_0z = 0$. Suppose that $z$ acts invariantly on $M_i$ for all $i < t$. For $m \in M_t$, $d \in E$, $(zm)d = z(md) - m(zd) \in zM_{t-1} + mE \subseteq M_{t-1}$ with similar expressions for $(mz)d$, $d(mz)$ and $d(zm)$. Thus $z$ acts invariantly on each $M_i$, and hence $z^2$ does also. Thus $\langle\ z\ \rangle$ acts invariantly on each $M_i$. But $\langle\ z\ \rangle$ acts nilpotently on $M$ by the lemma. Therefore $F$ acts nilpotently on $M$ which is a contradiction.
\end{proof}

We obtain the abstract version of the theorem.

\begin{corollary}
Let $C$ be a Lie set in a Leibniz algebra $A$ such that $\langle\ C\ \rangle = A$ and $L_c$ is nilpotent for all $c \in C$. Then $A$ is nilpotent.
\end{corollary}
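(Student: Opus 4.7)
The plan is to apply the Theorem to $M = A$ viewed as a Leibniz bimodule over itself via the regular representation, so that $T_a = L_a$ and $S_a = R_a$. Under this identification, the hypothesis that each $L_c$ is nilpotent for $c \in C$ is exactly the nilpotency of $T_c$ demanded by the Theorem, so its conclusion furnishes a nonzero $z \in A$ with $az = za = 0$ for every $a \in A$. Equivalently, the two-sided annihilator $Z = \{x \in A : Ax = xA = 0\}$ is nonzero.

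The proof then proceeds by induction on $\dim A$, the base case $\dim A = 0$ being trivial. First I would check, via two short applications of the defining identity $p(qr) = (pq)r + q(pr)$ (one to handle products of the form $a \cdot x$ and one for $x \cdot a$, with $x \in Z$), that $Z$ is in fact a two-sided ideal of $A$. Passing to the quotient $\bar A = A/Z$, the image $\bar C$ of $C$ remains a Lie set generating $\bar A$, and each $L_{\bar c}$ is nilpotent on $\bar A$ as the induced action of the nilpotent operator $L_c$ on $A$. Thus $\bar A$ satisfies the hypotheses of the Corollary with strictly smaller dimension.

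By the inductive hypothesis, $\bar A$ is nilpotent, so $\bar A^{k} = 0$ for some $k$; that is, $A^{k} \subseteq Z$. Since $A \cdot Z = Z \cdot A = 0$ by the definition of $Z$, this forces $A^{k+1} = 0$, closing the induction. I do not anticipate any substantive obstacle: the only conceptual step is recognizing that applying the Theorem to the regular bimodule automatically produces a nonzero element of the two-sided annihilator, after which verifying that $Z$ is an ideal, that the hypotheses descend to $A/Z$, and that the induction closes are all routine bookkeeping with the Leibniz identity.
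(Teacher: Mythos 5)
Your argument is correct, but it takes a longer route than the paper intends. The Theorem's conclusion is not merely the existence of a nonzero two-sided annihilated vector: it asserts that the associative algebra generated by the operators $S_a, T_a$ is nilpotent, i.e.\ that $A$ acts nilpotently on $M$. Taking $M=A$ to be the regular bimodule (so $T_a=L_a$, $S_a=R_a$), that statement \emph{is} the nilpotency of $A$ verbatim --- every product of sufficiently many multiplication operators vanishes, hence $A^{n+1}=0$ --- so the corollary follows in one step with no induction. You instead use only the weaker consequence ($Z=\{x : Ax=xA=0\}\neq 0$) and then induct on $\dim A$ through $A/Z$; this mirrors the classical deduction of abstract Engel from the representation-theoretic version and is perfectly valid: $Z$ is an ideal (trivially so, since $AZ=ZA=0$ by definition --- you do not even need the Leibniz identity here, contrary to what you suggest), the hypotheses descend to $A/Z$, and $A^k\subseteq Z$ forces $A^{k+1}=0$. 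The trade-off is that your version would survive even if one only knew the ``common annihilated vector'' form of the theorem, at the cost of an induction the paper's formulation makes unnecessary.
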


The following are extensions of results due to Jacobson, \cite{jacobson}, whose proofs are the same as in the Lie algebra case.

\begin{corollary}
If $T$ is an automorphism of $A$ of order $p$ and has no non-zero fixed points, then $A$ is nilpotent.
\end{corollary}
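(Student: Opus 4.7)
The plan is to follow Jacobson's classical argument for Lie algebras and reduce the statement to the previous corollary via an eigenspace decomposition. Since nilpotence is preserved under base change, I would first extend scalars to a splitting field $\mathbb{K}$ of $x^p - 1$ over $\mathbb{F}$, assuming (as is standard for this result) that $p$ is prime and coprime to $\mathrm{char}(\mathbb{F})$. Then $T$ becomes diagonalizable with eigenvalues in the group $\mu_p$ of $p$-th roots of unity, giving $A_\mathbb{K} := A \otimes_\mathbb{F} \mathbb{K} = \bigoplus_{\zeta \in \mu_p} A_\zeta$. Because $T$ is an algebra automorphism, the identity $T(xy) = T(x)T(y)$ forces $A_\zeta A_\eta \subseteq A_{\zeta\eta}$, making $A_\mathbb{K}$ a $\mu_p$-graded Leibniz algebra; the no-fixed-points hypothesis translates to $A_1 = 0$.

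Next, I would set $C = \bigcup_{\zeta \ne 1} A_\zeta$. The grading together with the vanishing $A_1 = 0$ shows $C \cdot C \subseteq C$ (a product $A_\zeta A_\eta$ with $\zeta\eta \ne 1$ stays in $C$, and one with $\zeta\eta = 1$ collapses to $0$, which lies in every $A_\zeta$), so $C$ is a Lie set. Since $A_1 = 0$, every element of $A_\mathbb{K}$ is a sum of homogeneous pieces from $C$, so $\langle\ C\ \rangle = A_\mathbb{K}$.

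The heart of the argument is showing that $L_c$ is nilpotent for each $c \in C$. If $c \in A_\zeta$ with $\zeta \ne 1$, then because $p$ is prime, $\zeta$ generates $\mu_p$, and $L_c^k$ sends $A_\eta$ into $A_{\zeta^k \eta}$. For every $\eta \in \mu_p$ there exists $k \leq p$ with $\zeta^k \eta = 1$, forcing $L_c^k A_\eta \subseteq A_1 = 0$; hence $L_c^p = 0$. Applying the previous corollary to the Lie set $C$ in $A_\mathbb{K}$ yields that $A_\mathbb{K}$ is nilpotent, and nilpotence descends to $A$.

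The main obstacle is the bookkeeping around base change — verifying that the extended $T$ still has no non-zero fixed points on $A_\mathbb{K}$ (which holds because the fixed-point functor commutes with tensoring by the field extension) and that nilpotence of $A_\mathbb{K}$ implies nilpotence of $A$. The substantive insight, however, is the eigenspace argument: the hypothesis $A_1 = 0$ is exactly what allows the cyclic structure of $\mu_p$ to force nilpotence of $L_c$ on each homogeneous piece, which is where the conclusion really comes from.
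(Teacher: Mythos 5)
Your proof is correct and is precisely the argument the paper has in mind: it states that these corollaries have "proofs the same as in the Lie algebra case," and your eigenspace decomposition over a splitting field, the Lie set $C = \bigcup_{\zeta \ne 1} A_\zeta$ with $A_1 = 0$, and the nilpotence of $L_c$ via the cyclic structure of $\mu_p$ is exactly Jacobson's classical argument, reduced to the preceding corollary. The side conditions you flag ($p$ prime, $p$ coprime to the characteristic, descent of nilpotence under base change) are handled appropriately.
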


\begin{corollary}
If $D$ is a non-singular derivation of $A$ over a field of characteristic 0, then $A$ is nilpotent.
\end{corollary}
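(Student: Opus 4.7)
The plan is to mimic the classical Jacobson argument for Lie algebras and reduce to Corollary 1 (the abstract Engel statement for Leibniz algebras proved above). I will produce a generalized eigenspace decomposition of $A$ with respect to $D$, identify a Lie set $C$ that generates $A$ on which each $L_c$ is nilpotent, and then invoke Corollary 1. First, I would extend scalars to the algebraic closure $\overline{\mathbb{F}}$: the derivation $D$ extends to $D \otimes 1$, remains non-singular, and nilpotency of $A$ is equivalent to nilpotency of $A \otimes_{\mathbb{F}} \overline{\mathbb{F}}$, so I may assume $\mathbb{F}$ is algebraically closed.

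Next, write $A = \bigoplus_{\alpha} A_\alpha$ for the generalized eigenspace decomposition of $D$. Using that $D$ is a derivation, an induction on $n$ gives
\[
(D - \mu - \nu)^{n}(ab) \;=\; \sum_{k=0}^{n} \binom{n}{k}\,(D - \mu)^{k}(a)\,(D - \nu)^{n-k}(b),
\]
which yields $A_\mu A_\nu \subseteq A_{\mu + \nu}$. Non-singularity of $D$ forces $A_0 = 0$. Set $C = \bigcup_{\alpha} A_\alpha$; the preceding inclusion makes $C$ a Lie set, and since the $A_\alpha$ span $A$ as a vector space, $\langle\ C\ \rangle = A$.

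The crux is to show that for each $c \in C$, say $c \in A_\alpha$, the operator $L_c$ is nilpotent. Since $A_0 = 0$, any nonzero $c \in C$ lies in some $A_\alpha$ with $\alpha \neq 0$. From $L_c(A_\beta) \subseteq A_{\alpha + \beta}$ one gets $L_c^{k}(A_\beta) \subseteq A_{\beta + k\alpha}$; in characteristic $0$ the values $\beta + k\alpha$ are all distinct, while only finitely many of them can be eigenvalues of $D$, so eventually $A_{\beta + k\alpha} = 0$. Choosing a single $k$ large enough to handle each of the finitely many $\beta$ that occur gives $L_c^{k} = 0$, and Corollary 1 then yields that $A$ is nilpotent. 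The hard part is precisely this nilpotency check: the characteristic-$0$ hypothesis is essential here, since in characteristic $p > 0$ the shifts $\beta + k\alpha$ are periodic modulo $p$ and can remain inside the eigenvalue set of $D$ indefinitely, so $L_c$ need not be nilpotent and the argument collapses.
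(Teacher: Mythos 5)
Your proposal is correct and is exactly the argument the paper intends: the paper states that the proofs of these corollaries are the same as in the Lie algebra case, and your generalized-eigenspace decomposition with $A_\mu A_\nu \subseteq A_{\mu+\nu}$, the observation that $A_0 = 0$, and the reduction to Corollary 1 via the Lie set $C = \bigcup_\alpha A_\alpha$ is precisely Jacobson's classical proof transported to the Leibniz setting. The scalar-extension step and the characteristic-$0$ escape argument for the nilpotency of $L_c$ are both handled correctly.
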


Finally, 

\begin{corollary}
If $B$ and $C$ are nilpotent ideals of $A$, then $B+C$ is a nilpotent ideal of $A$.
\end{corollary}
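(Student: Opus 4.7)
The plan is to invoke the abstract Engel--Jacobson corollary stated earlier (``if $L_c$ is nilpotent for all $c$ in a generating Lie set, then $A$ is nilpotent'') for the Leibniz algebra $B + C$, using $X = B \cup C$ as the generating Lie set. First, I would observe that $B + C$ is itself an ideal of $A$, since the sum of two two-sided ideals is two-sided.

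Next I would verify that $X$ is a Lie set generating $B + C$. Generation is automatic because $B + C$ is already a subalgebra containing $X$. Closure under multiplication reduces to checking four products: $BB \subseteq B \subseteq X$ and $CC \subseteq C \subseteq X$ are immediate, while both cross products $BC$ and $CB$ land in $B \cap C$ (using that each of $B$ and $C$ is two-sided in $A$), and hence in $X$.

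The heart of the argument is verifying that $L_x$ acts nilpotently on $B + C$ for every $x \in X$. For $b \in B$, ideality of $B$ gives $L_b(B + C) \subseteq B$; nilpotence of the Leibniz algebra $B$ then forces $L_b$ to be nilpotent on $B$ itself, say with $L_b^n$ vanishing on $B$, and so $L_b^{n+1}$ annihilates all of $B + C$. The symmetric argument handles $c \in C$. With these ingredients in place, the abstract corollary delivers that $B + C$ is nilpotent as a Leibniz algebra.

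The main obstacle is precisely this third step: one has to promote ``$L_b$ is nilpotent on $B$'' to ``$L_b$ is nilpotent on $B + C$,'' and it is exactly the ideal property of $B$ that makes this upgrade work, by letting a single application of $L_b$ absorb any $C$-factor back into $B$ in one step, after which the preexisting nilpotence inside $B$ takes over.
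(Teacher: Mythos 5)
Your proposal is correct and follows exactly the route the paper intends: the paper explicitly says the result follows from the abstract corollary by the same argument as in the Lie case, namely taking the Lie set $B \cup C$ generating $B+C$ and checking that each $L_x$, $x \in B \cup C$, is nilpotent on $B+C$ via the ideal property. Your verification of the cross products $BC, CB \subseteq B \cap C$ and the promotion of nilpotence from $B$ to $B+C$ are precisely the needed details.
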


\vspace{1cm}

\begin{ack}
The authors completed this work as graduate students at North Carolina State University.  The authors wish to thank Professor E. L. Stitzinger for his guidance and support.
\end{ack}

\nocite{*}

\bibliographystyle{abbrv}
\bibliography{writeup}

\end{document}